\newcommand\blfootnote[1]{%
  \begingroup
  \renewcommand\thefootnote{}\footnote{#1}%
  \addtocounter{footnote}{-1}%
  \endgroup
}
\definecolor{header1}{cmyk}{0,0,0,1}
\def \GL {\operatorname{GL}}
\def \R {\mathbbm{R}}
\def \C {\mathbbm{C}}
\numberwithin{equation}{section}
\numberwithin{table}{section}
\numberwithin{equation}{section}
\newtheorem{theorem}{Theorem}[section]
\newtheorem{proposition}[theorem]{Proposition}
\newtheorem{definition}[theorem]{Definition}
\newtheorem{example}[theorem]{Example}
\newtheorem{remark}[theorem]{Remark}
\newcommand{\munarrow}{\mathmakebox[\widthof{1}][c]{\mu}}
\title{\vspace{-.125in}{\huge\selectfont \textbf{Qi's problems on classifications of third- and fourth-order symmetric tensors by eigenvalues}}\vspace{-.075in}}
\author{\normalsize{Lishan Fang and Hua-Lin Huang$^{*}$}\\
\footnotesize{School of Mathematical Sciences, Huaqiao University, Quanzhou 362021, China} 
}
\date{}
\begin{document}
\maketitle



\blfootnote{$^*$ Corresponding author: hualin.huang@hqu.edu.cn}
\vspace{-.2in}
\begin{abstract}
This paper addresses two fundamental problems posed by Qi regarding the sufficiency of eigenvalues for the classification of symmetric tensors in the two-dimensional setting. For $2\times2\times2$ and $2\times2\times2\times2$ complex symmetric tensors, we establish their complete set of equivalence classes via a one-to-one correspondence with the canonical forms of their associated binary cubics and quartics. We then prove that these equivalence classes are uniquely determined by spectral invariants, specifically, the number of eigenpair classes and the multiplicities of zero eigenvalues, over the complex domain. We demonstrate that this classification does not hold in the real domain, where distinct equivalence classes can share identical spectral invariants. Finally, we extend this approach to derive canonical forms and complete classification for complex third- and fourth-order linear partial differential equations in two variables using their bijective relationship to binary forms.
\end{abstract}

\textbf{Keywords}: tensor eigenvalues, symmetric tensors, canonical forms, partial differential equations

\section{Introduction}\label{sec:intro}

The classification of symmetric tensors offers a vital mathematical framework for high-dimensional problems in fields ranging from data analysis to quantum field theory~\cite{landsberg2017geometry}. While eigenvalues have successfully characterized and classified symmetric matrices, extending this paradigm to multilinear algebra presents significant challenges~\cite{horn2012matrix}. In 2006, Qi~\cite{qi2006rank} posed two fundamental and related questions: ``Can we classify third order and fourth order three-dimensional supersymmetric tensors by their eigenvalues?'' and ``Is the classification of cubic and quartic hypersurfaces related with the classification of third and fourth order partial differential equations?'' Although these questions remain open for general cases, this work establishes that eigenvalue-based classification is both viable and effective for two-dimensional complex symmetric tensors and their associated partial differential equations (PDEs).

The theory of tensor eigenvalues was proposed independently by Qi~\cite{qi2005eigenvalues,qi2006rank,qi2007eigenvalues} and Lim~\cite{lim2005singular} in 2005. Since then, it has found broad applications in theoretical and applied areas, including best rank-one approximation, control, and nonlinear continuum mechanics; see~\cite{li2013characteristic, nie2018real, qi2018tensor} and references therein. Subsequent research has focused on fundamental spectral properties, such as bounds on the number of eigenvalues, the structure of characteristic polynomials~\cite{cartwright2013number, hu2013characteristic, li2013characteristic, ni2007degree}, and the concept of eigenpair class introduced by Cartwright and Sturmfels~\cite{cartwright2013number} to account for the intrinsic scaling freedom. However, recent work demonstrates the insufficiency of an eigenvalue-based approach in the real domain, showing that distinct equivalence classes of third-order symmetric tensors are not uniquely separated by their spectral signatures~\cite{fang2025eigenvalues}. Given that the motivating questions for this study arise within the context of Z-eigenvalues, we adopt the E-eigenvalue framework, the complex extension of Z-eigenvalues, for our classification. Accordingly, throughout this work, the term eigenvalue refers specifically to the E-eigenvalue as defined in~\cite{qi2005eigenvalues,li2013characteristic}.

As a follow-up to~\cite{fang2025eigenvalues}, this paper makes three primary contributions. First, we prove that these classes are uniquely characterized by spectral invariants, specifically the number of eigenpair classes and the multiplicity of zero eigenvalues. This is achieved based on a bijective correspondence between equivalence classes for $2\times2\times2$ and $2\times2\times2\times2$ complex symmetric tensors and the canonical forms of their associated binary cubics and quartics. Second, we demonstrate that this eigenvalue-based approach is insufficient in the real domain, where distinct equivalence classes are shown to share identical spectral signatures. Third, we establish an equivalence between the tensor classification and the classification of third- and fourth-order linear PDEs. Complete canonical forms for such PDEs are provided, thereby addressing Qi's second open question in the two-dimensional case.

The paper proceeds as follows. 
Section~\ref{sec:preliminary} reviews the theory of symmetric tensors and eigenvalues.
Sections~\ref{sec:tensor_third} and~\ref{sec:tensor_fourth} present the equivalence classes and structures of eigenpairs for $2\times2\times2$ and $2\times2\times2\times2$ symmetric tensors, respectively.
Section~\ref{sec:pde} extends the classification results to linear PDEs.

\section{Symmetric tensors and eigenvalues}\label{sec:preliminary}

For the convenience of the reader, we recall the definitions of symmetric tensors and eigenvalues in Subsections~\ref{sec:tensor} and~\ref{sec:eigenvalue}, respectively; more details are given in~\cite{qi2005eigenvalues,qi2018tensor}.

\subsection{Symmetric tensors}\label{sec:tensor}

A complex $m$-th order $n$-dimensional tensor $A$ consists of $n^m$ entries $a_{i_{1} \ldots i_{m}} \in \C$, where $i_{j} = 1, \ldots, n$ for $j=1,\ldots,m$. The tensor $A$ is symmetric if its entries remain invariant under any permutation of indices, which is also known as a ``supersymmetric'' tensor in~\cite{qi2005eigenvalues, qi2006rank}. For clarity, an $m$-th order $n$-dimensional symmetric tensor is referred to as an~$n \times n \times \cdots \times n$ ($m$ times) symmetric tensor. Two tensors $A$ and $B$ are \textit{equivalent}, that is, $A \sim B$, if there exists an invertible matrix~$P \in \GL_n(\C)$ such that
\begin{equation*}
    B=AP^m = \left( \sum_{i_1,\ldots, i_m =1}^{n} a_{i_{1} \ldots i_{m}} p_{i_1j_1}\ldots p_{i_mj_m} \right)_{1 \le j_1,\ldots, j_m \le n}.
\end{equation*}

The symmetric tensor $A$ is associated with a homogeneous polynomial $f(\bm{x})\in\C[x_1,\ldots,x_n]$ of degree $m$, defined as
\[
    f(\bm{x}) \equiv A\bm{x}^m :=\sum_{i_{1}, \ldots, i_{m}=1}^{n} a_{i_{1} \ldots i_{m}}x_{i_1}\ldots x_{i_m},
\]
where $\bm{x}=(x_1,\ldots,x_n)$ is the vector of variables. 
Two polynomials $f$ and $g$ are equivalent if there exists $P\in \GL_n(\C)$ such that 
\begin{equation*}
    g(x)=f(Px) = \sum_{j_1,\ldots, j_m =1}^{n} \sum_{i_1,\ldots, i_m =1}^{n} a_{i_{1} \ldots i_{m}} p_{i_1j_1}\ldots p_{i_mj_m} x_{j_1}\ldots x_{j_m}.
\end{equation*}
Consequently, the equivalence classes of symmetric tensors are in one-to-one correspondence with the canonical forms of their associated homogeneous polynomials.

\subsection{Tensor eigenvalues}\label{sec:eigenvalue}


Tensor spectral theory exhibits greater complexity than its matrix counterpart, primarily due to the scaling freedom in the definition. 


\begin{definition}\label{def:eigenvalue}
Let $A=(a_{i_{1} \ldots i_{m}})$ be an $m$-th order $n$-dimensional tensor. If $\lambda \in \C$ and $\bm{x}\in \C^{n}\setminus \{0\}$ satisfy $A\bm{x}^{m-1} = \lambda \bm{x}$, then $\lambda$ is an eigenvalue of $A$ and $\bm{x}$ is an eigenvector of $A$.
\end{definition}

The above definition lacks scale invariance and has motivated early researchers to impose normalization conditions, such as $\bm{x}\cdot \bm{x}=1$ for real eigenvectors and $\bm{x}\cdot \overline{\bm{x}}=1$ for complex eigenvectors~\cite{qi2005eigenvalues, hu2013characteristic, li2013characteristic}. To address the scaling problem, the concept of \textit{eigenpair class} was formalized by Cartwright and Sturmfels~\cite{cartwright2013number}. Two eigenpairs $(\lambda, \bm{x})$ and $(\lambda', \bm{x}')$ are \textit{equivalent} if there exists a nonzero scalar $t\in \C\setminus\{0\}$ such that 
\begin{equation}\label{eqn:eigenpair}
  \begin{cases} 
    \begin{aligned}
      t^{m-2}\lambda=\lambda', \\
      t\bm{x}=\bm{x}'.
    \end{aligned} \\
  \end{cases}
\end{equation}
This equivalence class is essential as eigenvalues are not invariant under $\GL_n(\C)$ transformation unless we restrict to the orthogonal group.

This approach works with equivalence classes that encompass all solutions without explicit restriction to normalization, which accounts for the inherent scaling freedom of tensor eigenvalues. It has been proved that the number of such eigenpair classes for a generic $m$-th order $n$-dimensional tensor is $((m-1)^n-1)/(m-2)$, which is also the degree of the associated characteristic polynomial. Throughout this paper, we adopt the term ``eigenpair class" to distinguish this equivalence class of eigenpairs from the tensor equivalence classes discussed previously.


\begin{remark}\label{rmk:lambda_square}
For odd-order tensors, eigenvalues naturally appear in $\pm \lambda$ pairs due to symmetry. This results in the characteristic equation being a polynomial in $\lambda^2$. However, Definition~\ref{def:eigenvalue} implies that $(\lambda,(x,y))$ and $(-\lambda,(-x,-y))$ are collapsed into the same eigenpair class. To maintain the consistency with the definition of the eigenpair class, we continue to use $\lambda$ instead of $\lambda^2$.
\end{remark}

\section{Classifications of $2 \times 2 \times 2$ symmetric tensors}\label{sec:tensor_third}

The classification of $2 \times 2 \times 2$ symmetric tensors follows directly from the classification of binary cubics, where each equivalence class of a tensor corresponds bijectively to a canonical form of its associated homogeneous polynomial. These canonical forms can be derived through classical invariant theory~\cite{gurevich1964foundations, olver1999classical} as well as Harrison's center theory~\cite{fang2025eigenvalues, huang2025solving}. They provide the fundamental tools for our classification framework.
We investigate the canonical forms and classification of these tensors over the complex and real domains in Subsections~\ref{sec:tensor_third_complex} and~\ref{sec:tensor_third_real}, respectively. 

\subsection{$2 \times 2 \times 2$ complex symmetric tensors}\label{sec:tensor_third_complex}

Consider a $2 \times 2 \times 2$ complex symmetric tensor $A=(a_{ijk})_{1\le i,j,k \le 2}$ represented as
\begin{equation*}
A = \left( \begin{pmatrix}
a & b \\
b & c \\
\end{pmatrix} , 
\begin{pmatrix}
b & c \\
c & d \\
\end{pmatrix} \right),  
\end{equation*}
where $a_{111}=a$, $a_{112}=a_{121}=a_{211}=b$, $a_{122}=a_{212}=a_{221}=c$, $a_{222}=d$ and $a,b,c,d \in \C$. The tensor~$A$ corresponds to the binary cubic~$f(x,y) \in \C[x,y]$ given by
\begin{equation*}
    f(x,y) = ax^3+3bx^2y+3cxy^2+dy^3.
\end{equation*}
According to Definition~\ref{def:eigenvalue}, the eigenvalues $\lambda \in \C$ and associated eigenvectors $\bm{x}=(x,y)\in \C^2 \setminus \{0\}$ of $A$ must satisfy the system
\begin{equation}\label{eqn:system_third}
  \begin{cases} 
    \begin{aligned}
      ax^2+2bxy+cy^2 &= \lambda x, \\
      bx^2+2cxy+dy^2 &= \lambda y.
    \end{aligned} \\
  \end{cases}
\end{equation}

We classify these tensors using the canonical forms of their associated binary cubics, which capture all equivalence classes under $\GL_2(\C)$ transformations, as summarized in Table~\ref{tab:binary_cubics}. Since these canonical forms offer simplified polynomial representations, they serve as representatives of their corresponding tensor equivalence classes for later discussions. We now establish that the spectral invariants, specifically the number of eigenpair classes and the multiplicities of zero eigenvalues, uniquely characterize these equivalence classes. 
The following proposition provides the exact values of these invariants for each equivalence class, demonstrating their distinctness.

\begin{table}[h]
    \centering
    \begin{tabular}{ccc}
        \toprule
        Type & Equivalence classes & Canonical forms\\
        \midrule
        1 & $\left( \begin{pmatrix}0 & 0 \\0 & 0 \\\end{pmatrix} ,\begin{pmatrix}0 & 0 \\0 & 0 \\\end{pmatrix} \right)$ & $0$ \\[0.3cm]
        2 & $\left( \begin{pmatrix}1 & 0 \\0 & 0 \\\end{pmatrix} ,\begin{pmatrix}0 & 0 \\0 & 0 \\\end{pmatrix} \right)$ & $x^3$ \\[0.3cm]
        3 & $\left( \begin{pmatrix}1 & 0 \\0 & 0 \\\end{pmatrix} ,\begin{pmatrix}0 & 0 \\0 & 1 \\\end{pmatrix} \right)$ & $x^3+y^3$ \\[0.3cm]
        4 & $\left( \begin{pmatrix}0 & 1 \\1 & 0 \\\end{pmatrix} , \begin{pmatrix}1 & 0 \\0 & 0 \\\end{pmatrix} \right)$ & $3x^2y$ \\
        \bottomrule
    \end{tabular}
    \caption{Equivalence classes of $2\times2\times2$ complex symmetric tensors and associated canonical forms}
    \label{tab:binary_cubics}
\end{table}

\begin{proposition}\label{prop:third_complex}
The equivalence classes of $2\times2\times2$ complex symmetric tensors under $\GL_2{\C}$ transformation are uniquely determined by the number of eigenpair classes and the multiplicities of zero eigenvalues, as summarized below:
\begin{center}
        \begin{tabular}{cccc}
        \toprule
        Type & Canonical forms & \# Eigenpair classes & \# Zero eigenvalues \\
        \midrule
        1 & $0$ & infinite & infinite \\
        2 & $x^3$ & 2 & 1 \\
        3 & $x^3+y^3$ & 3 & 0 \\
        4 & $3x^2y$ & 3 & 1 \\
        \bottomrule
    \end{tabular}
\end{center}
\end{proposition}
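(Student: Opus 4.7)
The plan is to verify the proposition by direct computation: for each of the four canonical forms in Table~\ref{tab:binary_cubics}, substitute into the eigenvalue system~\eqref{eqn:system_third}, enumerate all solutions $(\lambda,(x,y))$ with $(x,y)\neq 0$, and quotient by the scaling equivalence~\eqref{eqn:eigenpair}. With $m=3$ this relation simplifies to $(\lambda,(x,y))\sim(t\lambda,(tx,ty))$ for $t\in\C\setminus\{0\}$, so within each eigenpair class the projective ratio $x:y$ is invariant. Once the four tuples (number of eigenpair classes, number with zero eigenvalue) are shown to take the pairwise distinct values $(\infty,\infty)$, $(2,1)$, $(3,0)$, $(3,1)$, the claim follows at once.

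Types~1 and~3 are immediate. For the zero tensor every nonzero $(x,y)$ pairs with $\lambda=0$, so the eigenpair classes are in bijection with lines through the origin in $\C^2$ and all have zero eigenvalue. For $f=x^3+y^3$ the system decouples into $x(x-\lambda)=0$ and $y(y-\lambda)=0$; the three cases ``$x=0$'', ``$y=0$'', ``$xy\neq 0$'' produce the classes $[(1,(0,1))]$, $[(1,(1,0))]$, $[(1,(1,1))]$, none with $\lambda=0$. For $f=x^3$ the system becomes $x^2=\lambda x$ and $\lambda y=0$: the branch $x=0$ forces $\lambda=0$ with $y\neq 0$, yielding $[(0,(0,1))]$, while $x\neq 0$ forces $y=0$ and $\lambda=x$, yielding $[(1,(1,0))]$ after rescaling. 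Thus $2$ classes and exactly one zero.

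The Type~4 tensor $f=3x^2y$ is the only case requiring genuine care. The system reads $2xy=\lambda x$ and $x^2=\lambda y$. The branch $x=0$ gives $\lambda=0$ with $y\neq 0$, hence the class $[(0,(0,1))]$. For $x\neq 0$ the first equation yields $\lambda=2y$ and the second then gives $x^2=2y^2$, so $y\neq 0$ and $x=\pm\sqrt{2}\,y$. To confirm that the two sign branches give distinct eigenpair classes, note that any equivalence in~\eqref{eqn:eigenpair} preserves the projective ratio $x:y$, which equals $+\sqrt{2}:1$ on one branch and $-\sqrt{2}:1$ on the other. This produces two further classes $[(2,(\sqrt{2},1))]$ and $[(2,(-\sqrt{2},1))]$, giving $3$ classes total with exactly one zero.

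I anticipate no serious obstacle; the only subtlety is the bookkeeping in Type~4, where one must avoid both conflating the $\pm\sqrt{2}$ branches and double-counting by treating different scalings of $y$ along a fixed branch as distinct. The invariance of the projective ratio $x:y$ under~\eqref{eqn:eigenpair} settles both points simultaneously, and the resulting four tuples of invariants being pairwise distinct establishes the proposition.
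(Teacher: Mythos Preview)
Your case-by-case computations at the four canonical representatives are correct, but the argument has a genuine gap: you never show that the two quantities you compute --- the total number of eigenpair classes and the number of them with $\lambda=0$ --- are constant on each $\GL_2(\C)$-orbit. Without this invariance, the fact that the four \emph{representatives} take pairwise distinct values does not entitle you to classify an arbitrary tensor $A$ by computing \emph{its} eigenpairs; you would first have to reduce $A$ to canonical form, defeating the point of the proposition (and of Algorithm~\ref{alg:eigenpair} and Examples~\ref{ex:third_1}--\ref{ex:third_3}, which classify non-canonical tensors directly from their eigenpair counts). And this invariance is \emph{not} automatic: as stressed in Section~\ref{sec:eigenvalue}, eigenvalues are preserved only by orthogonal changes of basis, not by general $\GL_2(\C)$. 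Concretely, if $B=AP^3$ then $B\bm{y}^2=\lambda\bm{y}$ rewrites as $A(P\bm{y})^2=\lambda(PP^T)^{-1}(P\bm{y})$, which matches the eigenpair system for $A$ only when $PP^T=I$. The zero-eigenvalue count happens to be invariant for a separate reason (it counts projective singularities of the associated form, and $\nabla g = P^T(\nabla f)\circ P$), but the \emph{total} eigenpair count comes from the polynomial $Q=y f_x-x f_y$, which does not transform covariantly, so its root structure is not a priori preserved.

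The paper's proof is organized precisely to close this gap. For each canonical $f$ it considers the \emph{generic} member of the orbit, namely the tensor associated with $g(u,v)=f(P(u,v)^T)$ for arbitrary $P\in\GL_2(\C)$, forms the eigenvector equation $Q(u,v)=v\,\partial_u g-u\,\partial_v g$, factors it explicitly in the entries $p_{ij}$, and verifies that both the number of projective roots and the number landing at singularities of $g$ are independent of $P$. Your direct computations reappear there as the special case $P=I$; what is missing from your version is exactly this uniformity in $P$.
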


\begin{proof}

The proof proceeds by establishing the necessary spectral invariants for each canonical form and confirming their adequacy for the unique determination as claimed in the proposition.

We begin by defining the generic transformation $g(u,v)=f(P(u,v)^T)$ for $P=\begin{pmatrix} p_{1} & p_{2} \\ p_{3} & p_{4} \\\end{pmatrix}\in \GL_{2}(\C)$.
Let $A$ be a $2\times2\times2$ symmetric tensor associated with the binary cubic $g$, whose eigenvalues $\lambda$ and eigenvectors $(u,v)$ satisfy System~\eqref{eqn:system_third}. By eliminating $\lambda$ from the system, we obtain the eigenvector equation~$Q(u,v) = v \frac{\partial g}{\partial u} - u \frac{\partial g}{\partial v} = 0$, which defines the directions of the eigenvectors. The crux is that if $\lambda=0$, the corresponding eigenvector $(u,v)$ indicates a singularity of $g$, where $\nabla g = 0$.

(1) We first consider~$f(x,y)=0$, whose transformed form remains $g(u,v)=0$. It is evident that $\lambda=0$ is the only solution, and every vector is an eigenvector with $\lambda=0$. Thus, $f$ has infinite eigenpair classes with zero eigenvalues.

(2) For $f(x,y)=x^3$, the transformed form is $g(u,v) = (p_{1}u+p_{2}v)^3$, whose $\lambda$ and $(u,v)$ must satisfy System~\eqref{eqn:system_third}, written as
\begin{equation*}\label{eqn:eigenvalue_x^3}
  \begin{cases} 
    \begin{aligned}
        p_{1} (p_{1}u+p_{2}v)^2 &= \lambda u, \\
        p_{2} (p_{1}u+p_{2}v)^2 &= \lambda v.
    \end{aligned} \\
  \end{cases}
\end{equation*}
Eliminating $\lambda$ from this system yields an eigenvector equation
\[
    Q(u,v)=(p_1 u+p_2 v)^2(p_1 v-p_2 u)=0.
\]
This equation has a double root along the line $p_1 u+p_2 v=0$ and a simple root along $p_1 v-p_2 u=0$, which gives two eigenvectors. If $\lambda=0$, it is obvious that $(p_1 u+p_2 v)^2=0$, which corresponds to the line $p_1 u+p_2 v=0$. Thus, this canonical form has two eigenpair classes, and one of them has zero eigenvalues.

(3) Consider the transformed form $g(u,v) = (p_1 u+p_2 v)^3+(p_3 u+ p_4v)^3$.  
Its eigenvector equation is computed as
\[
    Q(u,v)=(p_1 v-p_2 u)(p_1u+p_2 v)^2+(p_3 v-p_4 u)(p_3 u+ p_4v)^2=0.
\]
Dividing $Q(u,v)$ by $v^3$ and setting $r=u/v$ yields a cubic equation. By the Fundamental Theorem of Algebra, it has three roots, counting multiplicities, corresponding to three eigenvectors. It remains to verify whether their associated eigenvalues are zeros. Let $\lambda=0$, the system can be written as
\[
    \begin{pmatrix} p_{1} & p_{3} \\ p_{2} & p_{4} \end{pmatrix} \begin{pmatrix} (p_1u+p_2 v)^2 \\ (p_3 u+ p_4v)^2 \end{pmatrix}= \begin{pmatrix} 0 \\ 0 \end{pmatrix}.
\]
The coefficient matrix of the above system is $P^T$, which is invertible as $P$ is invertible. Thus, the only solution is the trivial solution $p_1u+p_2v=0$ and $p_3u+p_4v=0$, which implies the zero eigenvectors $(u,v)=(0,0)$. Therefore, this canonical form has three eigenpair classes with nonzero eigenvalues.

(4) For the transformed form $g(u,v) = 3(p_1 u+p_2 v)^2(p_3 u+ p_4v)$, its eigenvector equation simplifies to
\[
    Q(u,v)=(p_1 u+p_2 v)\left((p_3 v-p_4 u)(p_1 u+p_2 v)+2(p_1 v-p_2 u)(p_3 u+p_4 v)\right)=0,
\]
which has a simple root along the line $p_1 u+p_2 v=0$. If $p_1 u+p_2 v=0$, the gradient of $g$ vanishes, which implies zero eigenvalues. The second factor of the equation gives a quadratic equation with two roots, counting multiplicities, corresponding to two new eigenvectors. 
Set $\lambda=0$, and the system becomes
\[
    \begin{pmatrix} p_{3} & 2p_{1} \\ p_{4} & 2p_{2} \end{pmatrix} \begin{pmatrix} p_1u+p_2 v \\ p_3 u+ p_4v \end{pmatrix}= \begin{pmatrix} 0 \\ 0 \end{pmatrix}.
\]
Since its coefficient matrix is invertible, the system only has the trivial solution $p_1u+p_2 v=0$ and $p_3 u+ p_4v=0$. This implies that $(u,v)=(0,0)$, which is not a valid eigenvector. Thus, this canonical form has three eigenpairs, among which the zero eigenvalue appears in exactly one class.

The analysis confirms the number of eigenpair classes and the multiplicities of zero eigenvalues, which provide a unique spectral signature for the four equivalence classes. This completes the proof of the classification.
\end{proof}

\begin{remark}\label{rmk:third_sinular}
The presence of zero eigenvalues in Proposition~\ref{prop:third_complex} corresponds directly to singularities of the binary cubic $f(x,y)=0$. A zero eigenvalue $\lambda=0$ requires an eigenvector $(x,y)$ where the gradient vanishes, i.e., $\nabla f = 0$. By definition, a non-zero point $(x,y)$ satisfying this condition is a singular point on the projective curve. The smooth cubic (Type 3) has no singular points and therefore no zero eigenvalues, while the singular cubics (Types 2 and 4) exhibit zero eigenvalues originating from their geometric singularities.
\end{remark}


The total number of eigenpair classes presented in~\ref{prop:third_complex}, confirms the maximum degree of the E-characteristic polynomials established by Cartwright and Sturmfels~\cite{cartwright2013number} for generic tensors. For Types 2 and 4 with singular points, the lower number reflects a reduction in the degree of the corresponding characteristic polynomial. This number, combined with the multiplicities of zero eigenvalues, provides a practical criterion for classifying such complex symmetric tensors, as shown in Algorithm~\ref{alg:eigenpair}.

\begin{algorithm}[h]
    \caption{Classification of $2\times2\times2$ complex symmetric tensors by eigenvalues}\label{alg:eigenpair}
    \begin{algorithmic}[1]
	\Statex \textbf{Input}: symmetric tensor~$A$
	\Statex \textbf{Output}: equivalence class of $A$
	\State Construct the system $A\bm{x}^{m-1}=\lambda \bm{x}$ and solve it to obtain the eigenpair classes $(\lambda, (x,y))$.
        \State Calculate the number of eigenpair classes (counting multiplicities) and the multiplicities of zero eigenvalues.
        \State Identify equivalence class of $A$ based on values listed in Proposition~\ref{prop:third_complex}.
    \end{algorithmic}
\end{algorithm}

We illustrate the classification procedure with two explicit examples.

\begin{example}\label{ex:third_1}
\emph{Let $A = \left( \begin{pmatrix}
1 & 2 \\
2 & 4 \\
\end{pmatrix} , 
\begin{pmatrix}
2 & 4 \\
4 & 8 \\
\end{pmatrix} \right)$, 
which is associated with $f(x,y)=x^3+6x^2y+12xy^2+8y^3$. 
According to Definition~\ref{def:eigenvalue}, its eigenvalues and eigenvectors satisfy 
 \begin{equation*}
  \begin{cases} 
    x^2+4xy+4y^2 = \lambda x, \\
    2x^2+8xy+8y^2 = \lambda y. \\
  \end{cases}
 \end{equation*}
Setting $\lambda=0$ leads to $(x+2y)^2=0$, which implies $x+2y=0$ and an eigenvector takes the form $(-2,1)$.
Assuming $\lambda\ne0$ and eliminating $\lambda$ from the system gives $y-2x=0$. It corresponds to an eigenvector $(1,2)$ and its eigenvalue is calculated as $\lambda=25$. Thus, we have two eigenpair classes
\[
    (0,t(-2,1)), \quad (25t, t(1,2))
\]
for $t\in\C\setminus\{0\}$. Since there are two eigenpair classes with zero eigenvalue occurring in one class, $A$ belongs to Type 2 in Proposition~\ref{prop:third_complex}.}
\end{example}

\begin{example}\label{ex:third_3}
\emph{Let $A = \left( \begin{pmatrix}
9 & 6 \\
6 & 6 \\
\end{pmatrix} , 
\begin{pmatrix}
6 & 6 \\
6 & 9 \\
\end{pmatrix} \right)$, 
which is associated with $f(x,y) = 9x^3 + 18x^2y + 18xy^2 + 9y^3$. According to Definition~\ref{def:eigenvalue}, its eigenvalues~$\lambda \in \C$ satisfy 
 \begin{equation*}
  \begin{cases} 
    9x^2 + 12xy + 6y^2 = \lambda x, \\
    6x^2 + 12xy + 9y^2 = \lambda y. \\
  \end{cases}
 \end{equation*}
Solving this system gives three eigenpair classes
\[
    (27t,t(1,1)), \quad \left(\left(\frac{3}{4}+\frac{3\sqrt{7}i}{4}\right)t, t\left(\frac{-3+\sqrt{7}i}{4},1\right)\right), \quad \left(t\left(\frac{3}{4}-\frac{3\sqrt{7}i}{4}\right), t\left(\frac{-3-\sqrt{7}i}{4},1\right)\right).
\]
Since $A$ has three eigenpair classes with nonzero eigenvalues, it belongs to Type 3 according to Proposition~\ref{prop:third_complex}.}
\end{example}

\subsection{$2 \times 2 \times 2$ real symmetric tensors}\label{sec:tensor_third_real}

While the eigenvalue-based classification is complete over the complex field, it requires refinement in the real domain. The canonical forms of real binary cubics are identical to the complex cases, except for Type 3, which splits into two real orbits $f\sim x^3+y^3$ and $f\sim x^3-3xy^2$. We now demonstrate that distinct real canonical forms can share an identical eigenpair structure. This proves the insufficiency of the eigenvalue-based classification for real tensors.


\begin{example}\label{ex:real_1}
\emph{Let $A_1 = \left( \begin{pmatrix}
1 & 0 \\
0 & 0 \\
\end{pmatrix} , 
\begin{pmatrix}
0 & 0 \\
0 & 1 \\
\end{pmatrix} \right)$, 
which is associated with the binary cubic~$f_1(x,y) = x^3+y^3$. Its eigenvalues and eigenvectors must satisfy the system
\begin{equation*} \begin{cases} 
        x^2 = \lambda x, \\
        y^2 = \lambda y. \\
\end{cases} \end{equation*}
Its eigenvector equation $Q(x,y)=xy(y-x)=0$ gives three real eigenlines, corresponding to three eigenpair classes with nonzero eigenvalues
\[
    (t,t(1,1)), \quad (t,t(0,1)), \quad (t,t(1,0)).
\]}
\end{example}

\begin{example}\label{ex:real_2}
\emph{Let $A_2 = \left( \begin{pmatrix}
1 & 0 \\
0 & -1 \\
\end{pmatrix} , 
\begin{pmatrix}
0 & -1 \\
-1 & 0 \\
\end{pmatrix} \right)$, 
which is associated with the form~$f_2(x,y) = x^3-3xy^2$. Its eigenvalues and eigenvectors must satisfy
\begin{equation*} \begin{cases} 
        x^2-y^2 = \lambda x, \\
        -2xy = \lambda y. \\
\end{cases} \end{equation*}
Solving this system yields three eigenpair classes with nonzero eigenvalues, which are
\[
    (t,t(1,0)), \quad \left(t,t\left(-\frac{1}{2},\frac{\sqrt{3}}{2}\right)\right), \quad \left(t,t\left(-\frac{1}{2},-\frac{\sqrt{3}}{2}\right)\right).
\]}
\end{example}

\begin{proposition}\label{prop:third_real}
The tensors $A_1$ and $A_2$ belong to distinct $\GL_2(\R)$-equivalence classes but share the same number of eigenpair classes and zero eigenvalue multiplicities.
\end{proposition}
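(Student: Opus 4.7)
The proposition has two assertions, and I would dispatch the easier one first. That $A_1$ and $A_2$ share the same spectral invariants can be read off directly from Examples~\ref{ex:real_1} and~\ref{ex:real_2}: each of them has exactly three eigenpair classes, all with nonzero eigenvalue, so both tensors sit at the values $(3,0)$ for (number of eigenpair classes, multiplicity of the zero eigenvalue), matching Type~3 of Proposition~\ref{prop:third_complex}. No further work is needed for this half.

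For the nonequivalence, I would appeal to the bijection from Section~\ref{sec:tensor} between $\GL_2(\R)$-equivalence classes of $2\times 2\times 2$ real symmetric tensors and $\GL_2(\R)$-orbits of their associated binary cubics. This reduces the claim to showing that the cubics $f_1(x,y)=x^3+y^3$ and $f_2(x,y)=x^3-3xy^2$ associated with $A_1$ and $A_2$ lie in distinct real orbits.

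The separating invariant I would use is the number of distinct real linear factors of the form, which is preserved by any $P\in\GL_2(\R)$, since a real invertible substitution sends real linear factors to real linear factors and preserves distinctness. The factorizations
\[
    f_1 = (x+y)(x^2-xy+y^2), \qquad f_2 = x(x-\sqrt{3}\,y)(x+\sqrt{3}\,y),
\]
together with the observation that the quadratic factor of $f_1$ has discriminant $-3<0$ and is therefore $\R$-irreducible, show that $f_1$ has exactly one real linear factor while $f_2$ has three. Hence $f_1$ and $f_2$ lie in different $\GL_2(\R)$-orbits, and so $A_1\not\sim A_2$.

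No step is a genuine obstacle; the only delicate point is conceptual, namely that the pair (number of eigenpair classes, multiplicity of the zero eigenvalue) is a complete invariant over $\C$ by Proposition~\ref{prop:third_complex} but loses the arithmetic information about real root configurations, and so cannot detect the splitting of complex Type~3 into two real orbits. A related invariant one could substitute is the sign of the classical discriminant of a binary cubic, which is $\GL_2(\R)$-invariant (it transforms by the even power $(\det P)^6$); it evaluates to $-27$ for $f_1$ and $108$ for $f_2$, giving the same separation.
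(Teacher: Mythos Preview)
Your proposal is correct and follows essentially the same approach as the paper: both argue nonequivalence by factoring $f_1$ and $f_2$ over $\R$, counting real linear factors (one versus three), and invoking invariance of this count under $\GL_2(\R)$; and both read off the matching spectral invariants directly from Examples~\ref{ex:real_1} and~\ref{ex:real_2}. Your added observation about the sign of the classical discriminant is a nice alternative separator but is not needed for the argument.
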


\begin{proof}
we first establish that their associated binary cubics $f_1$ and $f_2$ are not equivalent over $\R$. The roots of $f_1(x,y)= x^3+y^3= (x+y)(x^2-xy+y^2)$ consist of one real root and two complex conjugate roots. In contrast, $f_2(x,y) = x^3-3xy^2 = x(x-\sqrt{3}y)(x+\sqrt{3}y)$ factors into three distinct linear factors over $\R$, corresponding to three real roots. Since the number of real roots is invariant under real linear transformation, no $P \in \GL_2(\R)$ can transform $f_1$ into $f_2$. Consequently, their associated tensors $A_1$ and $A_2$ are not equivalent.

However, as calculated in Examples~\ref{ex:real_1} and~\ref{ex:real_2}, both $A_1$ and $A_2$ possess three eigenpair classes with the same eigenvalues. Thus, these invariant counts fail to distinguish between the two equivalence classes over $\R$.
\end{proof}

\begin{remark}\label{rmk:third_real}
Proposition~\ref{prop:third_real} reveals the insufficiency of this eigenvalue-based approach for classifying real tensors. The number of eigenpair classes and the multiplicities of zero eigenvalues are complex algebraic invariants, corresponding to the degree of the eigenvector equation. While they correctly characterize the tensor's geometry over $\C$, they fail to capture the real root structure of the associated binary cubic form $f$, as shown in~\cite{fang2025eigenvalues}. This deficiency arises because the number of real roots is not determined by these complex spectral counts. Thus, the classification of $2\times2\times2$ real symmetric tensors requires additional $\GL_2(\R)$ invariants to identify the real orbits.
\end{remark}

\section{Classifications of $2 \times 2 \times 2 \times 2$ symmetric tensors}\label{sec:tensor_fourth}

We now extend our eigenvalue-based approach to fourth-order symmetric tensors. The classification of $2\times2\times2\times2$ complex symmetric tensors follows the same fundamental principle using the canonical forms of the associated binary quartics, as shown in Subsection~\ref{sec:tensor_fourth_complex}. We then demonstrate the insufficiency of this approach for real cases in Subsection~\ref{sec:tensor_fourth_real}.

\subsection{$2 \times 2 \times 2 \times 2$ complex symmetric tensors}\label{sec:tensor_fourth_complex}

Consider a $2 \times 2 \times 2 \times 2$ complex symmetric tensor $A=(a_{ijkl})_{1\le i,j,k,l \le 2}$ be a $2 \times 2 \times 2 \times 2$ tensor expressed as
\begin{equation*}
A = \left(\left( \begin{pmatrix}
a & b \\
b & c \\
\end{pmatrix} , 
\begin{pmatrix}
b & c \\
c & d \\
\end{pmatrix} \right),
\left( \begin{pmatrix}
b & c \\
c & d \\
\end{pmatrix} , 
\begin{pmatrix}
c & d \\
d & e \\
\end{pmatrix} \right)\right),
\end{equation*}
where $a_{1111} = a$, $a_{1112} = a_{1121} = a_{1211} = a_{2111} = b$, $a_{1122} = a_{1212} = a_{1221} = a_{2112} = a_{2121} = a_{2211} = c$, $a_{1222} = a_{2122} = a_{2212} = a_{2221} = d$, $a_{2222} = e$ and $a,b,c,d,e \in \C$. The tensor~$A$ corresponds to the binary quartic~$f(x,y) \in \C[x,y]$ defined as
\begin{equation*}
    f(x,y) = ax^4+4bx^3y+6cx^2y^2+4dxy^3+ey^4.
\end{equation*}
According to Definition~\ref{def:eigenvalue}, eigenvalues $\lambda \in \C$ and associated eigenvectors $\bm{x}=(x,y)\in \C^2 \setminus \{0\}$ of $A$ must satisfy the system
\begin{equation}\label{eqn:system_fourth}
  \begin{cases} 
    \begin{aligned}
      ax^3+3bx^2y+3cxy^2+dy^3 &= \lambda x, \\
      bx^3+3cx^2y+3dxy^2+ey^3 &= \lambda y.
    \end{aligned} \\
  \end{cases}
\end{equation}

The equivalence classes of such tensors and their associated binary quartics are listed in Table~\ref{tab:binary_quartics}. As with the third-order case, we determine the number of eigenpair classes and the multiplicities of zero eigenvalues for each equivalence class, thereby proving that the equivalence classes can be uniquely mapped.

\begin{table}[h]
    \centering
    \begin{tabular}{ccc}
        \toprule
        Type & Equivalence classes & Canonical forms \\
        \midrule
        1 & $\left(\left( \begin{pmatrix}0 & 0 \\0 & 0 \\\end{pmatrix} , \begin{pmatrix}0 & 0 \\0 & 0 \\\end{pmatrix} \right),\left( \begin{pmatrix}0 & 0 \\0 & 0 \\\end{pmatrix} , \begin{pmatrix}0 & 0 \\0 & 0 \\\end{pmatrix} \right)\right)$ & $0$ \\[0.3cm]
        2 & $\left(\left( \begin{pmatrix}1 & 0 \\0 & 0 \\\end{pmatrix} , \begin{pmatrix}0 & 0 \\0 & 0 \\\end{pmatrix} \right),\left( \begin{pmatrix}0 & 0 \\0 & 0 \\\end{pmatrix} , \begin{pmatrix}0 & 0 \\0 & 0 \\\end{pmatrix} \right)\right)$ & $x^4$ \\[0.3cm]
        3 & $\left(\left( \begin{pmatrix}1 & 0 \\0 & \munarrow \\\end{pmatrix} , \begin{pmatrix}0 & \munarrow \\ \munarrow & 0 \\\end{pmatrix} \right),\left( \begin{pmatrix}0 & \munarrow \\ \munarrow & 0 \\\end{pmatrix} , \begin{pmatrix} \munarrow & 0 \\0 & 1 \\\end{pmatrix} \right)\right)$ & $x^4+6\mu x^2y^2+y^4, \mu \ne \pm\frac{1}{3}$ \\[0.3cm]
        4 & $\left(\left( \begin{pmatrix}0 & 0 \\0 & 1 \\\end{pmatrix} , \begin{pmatrix}0 & 1 \\1 & 0 \\\end{pmatrix} \right),\left( \begin{pmatrix}0 & 1 \\1 & 0 \\\end{pmatrix} , \begin{pmatrix}1 & 0 \\0 & 1 \\\end{pmatrix} \right)\right)$ & $6x^2y^2+y^4$ \\[0.3cm]
        5 & $\left(\left( \begin{pmatrix}0 & 0 \\0 & 1 \\\end{pmatrix} , \begin{pmatrix}0 & 1 \\1 & 0 \\\end{pmatrix} \right),\left( \begin{pmatrix}0 & 1 \\1 & 0 \\\end{pmatrix} , \begin{pmatrix}1 & 0 \\0 & 0 \\\end{pmatrix} \right)\right)$ & $6x^2y^2$ \\[0.3cm]
        6 & $\left(\left( \begin{pmatrix}0 & 1 \\1 & 0 \\\end{pmatrix} , \begin{pmatrix}1 & 0 \\ 0 & 0 \\\end{pmatrix} \right),\left( \begin{pmatrix} 1 & 0 \\0 & 0 \\\end{pmatrix} , \begin{pmatrix} 0 & 0 \\0 & 0 \\\end{pmatrix} \right)\right)$ & $4x^3y$\\
         \bottomrule
    \end{tabular}
    \caption{Equivalence classes of $2\times2\times2\times2$ complex symmetric tensors and associated canonical forms}
    \label{tab:binary_quartics}
\end{table}

\begin{proposition}\label{prop:fourth_complex}
The equivalence classes of $2\times2\times2\times2$ complex symmetric tensors under $\GL_2{\C}$ transformation are uniquely determined by the number of eigenpair classes and the multiplicities of zero eigenvalues, as summarized below:
\begin{center}
    \centering
    \begin{tabular}{cccc}
        \toprule
        Type & Canonical forms & \# Eigenpair classes & \# Zero eigenvalues \\
        \midrule
        1 & $0$ & infinite & infinite \\
        2 & $x^4$ & 2 & 1  \\
        3 & $x^4+6\mu x^2y^2+y^4, \mu \ne \pm \frac{1}{3}$ & 4 & 0  \\
        4 & $6x^2y^2+y^4$ & 4 & 1  \\
        5 & $6x^2y^2$ & 4 & 2   \\
        6 & $4x^3y$ & 3 & 1   \\
         \bottomrule
    \end{tabular}
\end{center}

\end{proposition}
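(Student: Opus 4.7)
The plan is to mirror the proof of Proposition~\ref{prop:third_complex} for each of the six canonical forms in Table~\ref{tab:binary_quartics}, computing the spectral signature (number of eigenpair classes, number of zero eigenvalues) and verifying that the resulting six pairs are pairwise distinct. Both numbers can be read off from two auxiliary objects attached to the binary quartic $f$: the eigenvector equation
\[
Q(x,y) = y\,\frac{\partial f}{\partial x} - x\,\frac{\partial f}{\partial y},
\]
obtained by eliminating $\lambda$ from System~\eqref{eqn:system_fourth}, whose distinct projective roots give the eigenvector directions, and the singular locus $\{\nabla f = 0\}$ of the projective curve, whose points are precisely the eigenvector directions carrying $\lambda = 0$ (cf.\ Remark~\ref{rmk:third_sinular}). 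Rather than working with the generic transformed form $g(u,v) = f(P(u,v)^T)$ as in Proposition~\ref{prop:third_complex}, I would first establish that both counts are $\GL_2(\C)$-invariants, since a change of variables by $P$ sends distinct roots of $Q$ and singular points of $\{f = 0\}$ bijectively to the corresponding objects of the transformed form with multiplicities preserved; it then suffices to compute directly on the canonical forms themselves.

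For Types 1, 2, 4, 5, 6 the computations are routine. Type 1 is immediate: $f = 0$ forces every vector to be a zero eigenvector, giving $(\infty, \infty)$. For $f = x^4$ one finds $Q = x^3 y$, with two distinct lines $x = 0$ (singular, $\lambda = 0$) and $y = 0$, hence $(2, 1)$. For $f = 6x^2 y^2 + y^4$ the equation $Q = xy(2y^2 - 3x^2)$ yields four distinct lines with the unique singular direction $(1, 0)$, hence $(4, 1)$. For $f = 6x^2 y^2$ one gets $Q = 3xy(y - x)(y + x)$, four distinct lines, and $\nabla f = 0$ yields the two singular directions $(1, 0)$ and $(0, 1)$, hence $(4, 2)$. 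For $f = 4x^3 y$ the equation $Q = x^2(3y^2 - x^2)$ has a double factor along $x = 0$, giving only three distinct lines, with the unique singular direction $(0, 1)$ carrying $\lambda = 0$, hence $(3, 1)$.

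The main obstacle will be Type 3, $f = x^4 + 6\mu x^2 y^2 + y^4$ with $\mu \ne \pm \frac{1}{3}$, because of the parameter and the delicate role of the two excluded values. A direct calculation gives $Q(x,y) = (1 - 3\mu)\, xy (x - y)(x + y)$, which is a genuine quartic with four distinct linear factors precisely when $1 - 3\mu \ne 0$. For the zero-eigenvalue count, the system $\nabla f = 0$ becomes $x(x^2 + 3\mu y^2) = 0$ and $y(3\mu x^2 + y^2) = 0$; an elimination in the case $xy \ne 0$ yields $(1 - 9\mu^2) y^2 = 0$, so the assumption $\mu \ne \pm \frac{1}{3}$ forces $(x,y) = (0,0)$ and hence no eigenvector carries $\lambda = 0$. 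Thus Type 3 has signature $(4, 0)$. The two excluded values are exactly those at which $f$ acquires a double factor ($(x^2 + y^2)^2$ or $(x^2 - y^2)^2$) and so fall under Types 4 or 5 after a $\GL_2(\C)$ change of variables, which justifies the restriction $\mu \neq \pm \frac{1}{3}$ as the precise nondegeneracy condition keeping $f$ in Type 3. Assembling the six signatures $(\infty, \infty), (2, 1), (4, 0), (4, 1), (4, 2), (3, 1)$ and observing that they are pairwise distinct completes the classification.
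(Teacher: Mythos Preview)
Your computations on the six canonical forms are correct, and the singular-locus half of your invariance claim is fine: since $\nabla g(\bm{u}) = P^{T}\nabla f(P\bm{u})$, singular points of $f$ and of $g=f\circ P$ correspond bijectively under $P$, so the zero-eigenvalue count is a genuine $\GL_{2}(\C)$-invariant. The gap is in the other half. The eigenvector form $Q_{f}=y\,\partial_{x}f-x\,\partial_{y}f$ is \emph{not} $\GL_{2}(\C)$-covariant. A direct computation gives
\[
Q_{g}(u,v)=(p_{1}v-p_{2}u)\,\partial_{x}f+(p_{3}v-p_{4}u)\,\partial_{y}f,
\]
which equals a scalar multiple of $Q_{f}(P(u,v))$ only when $P$ lies in the complex conformal orthogonal group preserving $x^{2}+y^{2}$. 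Hence there is no bijection between the roots of $Q_{f}$ and those of $Q_{g}$ in general, and the number of distinct eigenvector directions is \emph{not} a $\GL_{2}(\C)$-invariant. A concrete witness sits inside Type~5: the canonical form $6x^{2}y^{2}$ has $Q=12xy(y-x)(y+x)$ with four distinct projective roots, yet its $\GL_{2}(\C)$-transform $(x^{2}+y^{2})^{2}$ (take $x\mapsto c(u+iv),\ y\mapsto c(u-iv)$) satisfies $\nabla g=4(u^{2}+v^{2})(u,v)$, so $Q_{g}\equiv 0$ and every direction is an eigenvector. Thus your claimed bijection of $Q$-roots fails, and computing only on the canonical representatives does not suffice.

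This is exactly why the paper does the extra work you proposed to skip: for each type it writes the \emph{generic} transform $g(u,v)=f(P(u,v)^{T})$, factors $Q_{g}$ explicitly in terms of the $p_{ij}$, and argues the root count and zero-eigenvalue count directly from that factorization for arbitrary $P\in\GL_{2}(\C)$. That calculation with the parameters $p_{ij}$ is not decorative; it is the mechanism that replaces the invariance you asserted. (A minor side remark: at $\mu=\pm\tfrac{1}{3}$ the form $x^{4}+6\mu x^{2}y^{2}+y^{4}$ becomes $(x^{2}\pm y^{2})^{2}$, both of which are products of two repeated linear factors over $\C$ and hence land in Type~5, not Type~4.)
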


\begin{proof}

Type 1 is trivial, and a proof for Types 2 to 6 is given below. We follow the same approach as in Proposition~\ref{prop:third_complex} to calculate the number of eigenpair classes and the multiplicities of zero eigenvalues for each equivalence class.

(2) For the transformed form $g(u,v) = (p_1 u+p_2 v)^4$, we construct System~\eqref{eqn:system_fourth} and eliminate $\lambda$ to obtain an eigenvector equation as
\[
    Q(u,v)=p_1v(p_1 u+p_2 v)^3-p_2u(p_1 u+p_2 v)^3=(p_1 u+p_2 v)^3(p_1v-p_2u)=0.
\]
This equation has a triple root along the line $p_1 u+p_2 v=0$ and a simple root along $p_1 v-p_2 u=0$, corresponding to two eigenvectors. If $\lambda=0$, the system becomes $(p_1 u+p_2 v)^3=0$. It follows directly that the eigenvector corresponding to $p_1 u+p_2 v=0$ is associated with a zero eigenvalue. Hence, this canonical form has two eigenpair classes, and one class has zero eigenvalues.

(3) For the transformed form $g(u,v) = (p_1 u+p_2 v)^4+6\mu (p_1 u+p_2 v)^2(p_3 u+p_4 v)^2+(p_3 u+p_4 v)^4$, its eigenvector equation $Q(u,v)$ is 
\[
    (p_1v-p_2u)(p_1 u+p_2 v)\left((p_1 u+p_2 v)^2+3\mu(p_3 u+p_4 v)^2\right)+(p_3v-p_4u)(p_3 u+p_4 v)\left((p_3 u+p_4 v)^2+3\mu(p_1 u+p_2 v)^2\right)= 0.
\]
Dividing $Q(u,v)$ by $v^4$ and setting $r=u/v$ yields a quartic equation. By the Fundamental Theorem of Algebra, it has four roots, counting multiplicities, corresponding to four eigenvectors. 

To determine whether their associated eigenvalues are zero, we set $\lambda=0$ and the system becomes $P^T(X,Y)^T=0$, where 
\[
    X=(p_1 u+p_2 v)\left((p_1 u+p_2 v)^2+3\mu(p_3 u+p_4 v)^2\right), \quad Y=(p_3 u+p_4 v)\left((p_3 u+p_4 v)^2+3\mu(p_1 u+p_2 v)^2\right).
\]
Since $P^T$ is invertible, it is easy to see that $X=0$ and $Y=0$, which in turn implies that $p_1 u+p_2 v=0$ and $p_3 u+p_4 v=0$. It only has the trivial solution $(u,v)=(0,0)$, which is not an eigenvector. Therefore, this canonical form has four eigenpair classes with nonzero eigenvalues.

(4) Consider the transformed form $g(u,v) = 6(p_1 u+p_2 v)^2(p_3 u+p_4 v)^2+(p_3 u+p_4 v)^4$. Its eigenvector equation is
\[
    Q(u,v)=(p_3 u+p_4 v)\left(3(p_1 v-p_2 u)(p_1 u+p_2 v)(p_3 u+p_4 v)+(p_3 v- p_4 u)(3(p_1 u+p_2 v)^2+(p_3 u+p_4 v)^2\right)=0,
\]
which has a simple root along the line $p_3 u+p_4 v=0$ and is associated with $\lambda=0$. The second factor of $Q(u,v)$ yields a cubic equation, which has three new roots, counting multiplicities, corresponding to three new eigenvectors.

It remains to determine the multiplicities of zero eigenvalues. Substitute $\lambda=0$ into the system, and we obtain
\[
    \begin{pmatrix} 3p_{1} & p_{3} \\ 3p_{2} & p_{4} \end{pmatrix}\begin{pmatrix} X \\ Y \end{pmatrix}=\begin{pmatrix} 0 \\ 0 \end{pmatrix},
\]
where $X=3(p_1 u+p_2 v)(p_3 u+p_4 v)$ and $Y=3(p_1 u+p_2 v)^2+(p_3 u+p_4 v)^2$. Since the coefficient matrix is invertible, this system only has the trivial solution. This implies that $p_1 u+p_2 v=0$ and $p_3 u+p_4 v=0$. In other words, the corresponding eigenvector is a zero vector, which is invalid. Therefore, this canonical form has four eigenpair classes, among which the zero eigenvalue appears in exactly one class.

(5) For the transformed form $g(u,v) = 6(p_1 u+p_2 v)^2(p_3 u+p_4 v)^2$, its eigenvector equation is calculated as
\[
    Q(u,v)=3(p_1 u+p_2 v)(p_3 u+p_4 v)\left((p_3 u+p_4 v)(p_1 v-p_2 u)+(p_1 u+p_2 v)(p_3 v-p_4 u)\right)=0,
\]
which has two simple roots along the lines $p_1 u+p_2 v=0$ and $p_3 u+p_4 v=0$. Substituting them into the system gives zero eigenvalues $\lambda=0$. The third factor of the above equation gives a quadratic equation, which has two roots, counting multiplicities, corresponding to two eigenvectors.

Substitute $\lambda=0$ into the system, and we have
\[
    \begin{pmatrix} p_{3} & p_{1} \\ p_{4} & p_{2} \end{pmatrix}\begin{pmatrix} p_1 u+p_2 v \\ p_3 u+p_4 v \end{pmatrix}=\begin{pmatrix} 0 \\ 0 \end{pmatrix}.
\]
The invertible coefficient matrix forces $p_1 u+p_2 v=0$ and $p_3 u+p_4 v=0$, which yields the zero eigenvector $(u,v)=(0,0)$. Thus, the two eigenvectors correspond to nonzero eigenvalues, and this canonical form has four eigenpair classes with zero eigenvalue occurring in two classes.

(6) Consider the transformed form $g(u,v) = 4(p_1 u+p_2 v)^3(p_3 u+p_4 v)$. Its eigenvector equation is computed as
\[
    Q(u,v)=(p_1 u+p_2 v)^2\left(3(p_3 u+p_4 v)(p_1v-p_2u)+(p_1 u+p_2 v)(p_3v-p_4u)\right)=0,
\]
which has a double root along $p_1 u+p_2 v=0$, and is associated with a zero eigenvalue. The second factor of $Q(u,v)$ yields a quadratic equation with two roots, counting multiplicities, corresponding to two eigenvectors. 

Set $\lambda=0$ and express the system as
\[
    \begin{pmatrix} 3p_{1} & p_{3} \\ 3p_{2} & p_{4} \end{pmatrix}\begin{pmatrix} p_3 u+p_4 v \\ p_1 u+p_2 v \end{pmatrix}=\begin{pmatrix} 0 \\ 0 \end{pmatrix}.
\]
The invertible coefficient matrix suggests that the system only has the trivial solution, which leads to the invalid zero eigenvector $(u,v)=(0,0)$. It follows that the eigenvalues of the two new eigenvectors are nonzero. Therefore, this canonical form has three eigenpair classes, and only one class has zero eigenvalues.

Based on the above analysis, we observed that each equivalence class associated with the canonical forms possesses a distinct eigenpair structure as summarized in the table, which yields a unique criterion for the classification. This completes the proof.
\end{proof}


The following examples illustrate the classification of $2 \times 2 \times 2 \times 2$ complex symmetric tensors using Proposition~\ref{prop:fourth_complex}. It follows the same eigenvalue-based procedure developed for third-order tensors.


\begin{example}\label{ex:fourth_1}
\emph{Let $A = \left(\left( \begin{pmatrix}
16 & 8 \\
8 & 4 \\
\end{pmatrix} , 
\begin{pmatrix}
8 & 4 \\
4 & 2 \\
\end{pmatrix} \right),\left( \begin{pmatrix}
8 & 4 \\
4 & 2 \\
\end{pmatrix} , 
\begin{pmatrix}
4 & 2 \\
2 & 1 \\
\end{pmatrix} \right)\right)$, 
which corresponds to $f(x,y) = 16x^4 + 32x^3y + 24x^2y^2 + 8xy^3 + y^4$. According to Definition~\ref{def:eigenvalue}, its eigenvalues and eigenvectors satisfy 
 \begin{equation*}
  \begin{cases} 
    16x^3 + 24x^2y + 12xy^2 + 2y^3 = \lambda x, \\
    8x^3 + 12x^2y + 6xy^2 + y^3 = \lambda y. \\
  \end{cases}
 \end{equation*}
Solving this system gives two eigenpair classes 
\[
    (0, t(-1,2)), \quad (125t^2,t(2,1)).
\]
Since $A$ has two eigenpair classes with one zero eigenvalue, it belongs to Type 2 in Proposition~\ref{prop:fourth_complex}.}
\end{example}

\begin{example}\label{ex:fourth_2}
\emph{Let $A = \left(\left( \begin{pmatrix}
2 & 1 \\
1 & 1 \\
\end{pmatrix} , 
\begin{pmatrix}
1 & 1 \\
1 & 1 \\
\end{pmatrix} \right),\left( \begin{pmatrix}
1 & 1 \\
1 & 1 \\
\end{pmatrix} , 
\begin{pmatrix}
1 & 1 \\
1 & 2 \\
\end{pmatrix} \right)\right)$, 
which corresponds to $f(x,y) = 2x^4 + 4x^3y + 6x^2y^2 + 4xy^3 + 2y^4$.
According to Definition~\ref{def:eigenvalue}, its eigenvalues and eigenvectors satisfy 
 \begin{equation*}
  \begin{cases} 
    2x^3 + 3x^2y + 3xy^2 + y^3 = \lambda x, \\
    x^3 + 3x^2y + 3xy^2 + 2y^3 = \lambda y. \\
  \end{cases}
\end{equation*}
We eliminate $\lambda$ from the system and obtain four eigenpair classes
\[
    (9t^2,t(1,1)), \quad (t^2, t(-1,1)), \quad \left(0, t\left(\frac{-1+\sqrt{3}i}{2},1\right)\right), \quad \left(0, t\left(\frac{-1-\sqrt{3}i}{2},1\right)\right).
\]
Since $A$ has four eigenpair classes and two of them contain zero eigenvalues, it belongs to Type 5 in Proposition~\ref{prop:fourth_complex}.}
\end{example}

\subsection{$2 \times 2 \times 2 \times 2$ real symmetric tensors}\label{sec:tensor_fourth_real}

Following the classification of $2 \times 2 \times 2 \times 2$ symmetric tensors over $\C$, we now consider such tensors over $\R$. The classification of real binary quartics is finer than the complex case, as the six complex canonical forms split into ten distinct real forms under $\GL_{2}(\R)$ transformation. An example is the complex Type 3 of the form $f \sim x^4+6\mu x^2y^2+y^4$ with $\mu \ne \pm \frac{1}{3}$, which splits into three real orbits based on values of $\mu$. These forms are $f \sim x^4+6\mu x^2y^2+y^4$ with $\mu < -\frac{1}{3}$, $f \sim x^4+6\mu x^2y^2+y^4$ with $\mu > -\frac{1}{3}$, and $f \sim x^4+6\mu x^2y^2-y^4$. Now set $\mu=0$ for simplicity and show that the second and third real forms have different root structures but exhibit identical eigenpair structures. This observation extends the findings to the fourth-order case, highlighting the insufficiency of the eigenvalue-based approach.

\begin{example}\label{ex:fourth_real_1}
\emph{Let $A_3 = \left(\left( \begin{pmatrix}
1 & 0 \\
0 & 0 \\
\end{pmatrix} , 
\begin{pmatrix}
0 & 0 \\
0 & 0 \\
\end{pmatrix} \right),\left( \begin{pmatrix}
0 & 0 \\
0 & 0 \\
\end{pmatrix} , 
\begin{pmatrix}
0 & 0 \\
0 & 1 \\
\end{pmatrix} \right)\right)$, 
which is associated with $f_3(x,y) = x^4+y^4$. Its eigenvalues and eigenvectors must satisfy
\begin{equation*} \begin{cases} 
        x^3 = \lambda x, \\
        y^3 = \lambda y. \\
\end{cases} \end{equation*}
The resulting eigenvector equation $Q(x,y) = x^3y-xy^3 = xy(x+y)(x-y) = 0$ gives four eigenpair classes with nonzero eigenvalues
\[
    (t^2,t(1,0)), \quad (t^2,t(0,1)), \quad (t^2,t(1,1)), \quad (t^2,t(1,-1)).
\]}
\end{example}

\begin{example}\label{ex:fourth_real_2}
\emph{Let $A_4 = \left(\left( \begin{pmatrix}
1 & 0 \\
0 & 0 \\
\end{pmatrix} , 
\begin{pmatrix}
0 & 0 \\
0 & 0 \\
\end{pmatrix} \right),\left( \begin{pmatrix}
0 & 0 \\
0 & 0 \\
\end{pmatrix} , 
\begin{pmatrix}
0 & 0 \\
0 & -1 \\
\end{pmatrix} \right)\right)$, 
which is associated with $f_4(x,y) = x^4-y^4$. Its eigenvalues and eigenvectors must satisfy
\begin{equation*} \begin{cases} 
        x^3 = \lambda x, \\
        -y^3 = \lambda y. \\
\end{cases} \end{equation*}
Its eigenvector equation $Q(x,y) = xy(x^2+y^2) = 0$ gives four eigenpair classes with nonzero eigenvalues as
\[
    (t^2,t(1,0)), \quad (t^2,t(0,i)), \quad (t^2,t(1,i)), \quad (t^2,t(1,-i)).
\]}
\end{example}

\begin{proposition}\label{prop:fourth_real}
The tensors $A_3$ and $A_4$ belong to distinct $\GL_2(\R)$-equivalence classes but share the same number of eigenpair classes and zero eigenvalue multiplicities.
\end{proposition}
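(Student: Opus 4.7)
The plan is to mirror the structure used in Proposition~\ref{prop:third_real}: first demonstrate non-equivalence of $A_3$ and $A_4$ over $\R$ using a real-orbit invariant of their associated binary quartics, and then invoke the explicit spectral data already computed in Examples~\ref{ex:fourth_real_1} and~\ref{ex:fourth_real_2} to show that the complex-valued spectral invariants coincide.

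For the non-equivalence, I would count the number of real linear factors of the associated binary quartic, which is invariant under $\GL_2(\R)$. The form $f_3(x,y)=x^4+y^4$ is strictly positive on $\R^2\setminus\{0\}$, so it admits no real linear factor; in fact it decomposes as $(x^2-\sqrt{2}xy+y^2)(x^2+\sqrt{2}xy+y^2)$, a product of two irreducible real quadratics (each with negative discriminant). In contrast, $f_4(x,y)=x^4-y^4=(x-y)(x+y)(x^2+y^2)$ has exactly two real linear factors. Since a real linear change of variables sends real linear factors to real linear factors and preserves their count, no matrix $P\in\GL_2(\R)$ can carry $f_3$ to $f_4$. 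By the correspondence between tensor equivalence classes and canonical forms of their associated polynomials, this yields $A_3 \not\sim A_4$ over $\R$.

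For the identity of spectral invariants, Examples~\ref{ex:fourth_real_1} and~\ref{ex:fourth_real_2} already provide the full spectral data: each of $A_3$ and $A_4$ has exactly four eigenpair classes, and in each case all four associated eigenvalues are nonzero. Consequently, both tensors share the same spectral signature (four eigenpair classes, zero multiplicity of the eigenvalue $0$), matching complex Type 3 with $\mu=0$ in Proposition~\ref{prop:fourth_complex}.

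I do not foresee any serious obstacle: the argument is essentially bookkeeping built on the example calculations together with a standard invariance argument for factorization types. The only conceptual point worth highlighting is that $f_3$ and $f_4$ lie in a single $\GL_2(\C)$-orbit (both are the complex canonical form $x^4+y^4$ up to scaling the second variable by $i$), which is precisely why their complex-valued eigenpair counts must agree. Any $\GL_2(\R)$-invariant capable of distinguishing the two real orbits must detect the underlying real factorization structure, and the count of eigenpair classes together with the multiplicity of zero eigenvalues demonstrably does not.
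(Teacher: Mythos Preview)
Your proposal is correct and follows essentially the same approach as the paper: establish non-equivalence over $\R$ by comparing the real factorization types of $f_3$ and $f_4$ (the paper counts real roots via the complex factorizations, you count real linear factors via the real factorizations, which is the same invariant), then invoke the eigenpair computations from Examples~\ref{ex:fourth_real_1} and~\ref{ex:fourth_real_2} to conclude that the spectral signatures coincide. Your added remark that $f_3$ and $f_4$ lie in the same $\GL_2(\C)$-orbit is a nice conceptual gloss not present in the paper's proof.
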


\begin{proof}
We first prove that the binary quartics $f_3$ and $f_4$ are not equivalent over $\R$. The roots of $f_3(x,y)= x^4+y^4=(x-e^{i\pi/4} y)(x-e^{i3\pi/4} y)(x-e^{i5\pi/4} y)(x-e^{i7\pi/4} y)$ are four complex roots. In contrast, $f_4(x,y) = x^4-y^4=(x-y)(x+y)(x-iy)(x+iy)$ can be factorized into two distinct real linear factors and two complex conjugate factors, corresponding to two distinct real roots and two complex conjugate roots. It follows that no $P \in \GL_2(\R)$ can transform $f_3$ into $f_4$. Therefore, their associated tensors $A_3$ and $A_4$ are not equivalent over $\R$.

However, the calculation in Examples~\ref{ex:fourth_real_1} and~\ref{ex:fourth_real_2} demonstrates that both $A_3$ and $A_4$ have four eigenpair classes with the same eigenvalues. In other words, these spectral invariants cannot uniquely determine $\GL_2(\R)$-equivalence classes.
\end{proof}

\begin{remark}
The above results follow the principle for the third-order case in Remark~\ref{rmk:third_real}. The complex invariants, including the total number of eigenpair classes and the multiplicities of zero eigenvalues, are insufficient to separate real orbits of fourth-order tensors, as shown in Proposition~\ref{prop:fourth_real}. Therefore, classifying $2\times2\times2\times2$ real symmetric tensors also requires additional $\GL_2(\R)$ invariants.
\end{remark}

\section{Classifications of third- and fourth-order PDEs}\label{sec:pde}

In this section, we derive the canonical forms of third- and fourth-order linear PDEs based on the equivalence between the classification of binary forms and that of PDEs. The classification of second-order PDEs is well-established and has been extended to third order to address more challenging problems~\cite{dzhuraev1991classification, evans2022partial}. Let $L_m[U] = \Phi$ be an $m$-th order linear PDE, where $U$ is a function, $L_m[U]$ represents the principal part (terms with the highest-order derivatives), and $\Phi$ represents all lower-order terms. The classification is primarily determined by the principal part $L_m[U]$, but some lower-order terms may be included for finer classification~\cite{ben2003dispersion}. The following analysis focuses on canonical forms based on $L_m[U]$.

The complete classification results for real third- and fourth-order linear PDEs, derived via the correspondence to binary forms, are presented in Tables~\ref{tab:pde_third} and~\ref{tab:pde_fourth}, respectively. In these tables, the leading coefficients of the canonical forms for PDEs are normalized to 1 to avoid trivial scaling ambiguity, whereas the coefficients of canonical forms for binary forms are chosen to match those of symmetric tensors. Note that for real PDEs, the classification requires distinguishing between real orbits, which may merge under complex equivalence, as discussed in Section~\ref{sec:tensor_third_real}.

\begin{table}[h]
    \centering
    \begin{tabular}{ccc}
        \toprule
        Type & Canonical forms (binary cubics) & Canonical forms (PDEs) \\
        \midrule      
        1 & $0$ & $0 = \Phi$ \\
        2 & $x^3$ & $\partial_{x}^3U = \Phi$ \\
        3 & $x^3+y^3$ & $\partial_{x}^3U+\partial_{y}^3U = \Phi$ \\
        4 & $x^3-3xy^2$ & $\partial_{x}^3U-3\partial_{x}\partial_{y}^2U = \Phi$ \\
        5 & $ 3x^2y$ & $\partial_{x}^2\partial_{y}U = \Phi$ \\
        \bottomrule
    \end{tabular}
    \caption{Canonical forms of real binary cubics and third-order linear PDEs}
    \label{tab:pde_third}
\end{table}

\begin{table}[h]
    \centering
    \begin{tabular}{ccc}
        \toprule
        Type & Canonical forms (binary quartics) & Canonical forms (PDEs) \\
        \midrule        
        1 & $0$ & $0 = \Phi$ \\
        2 & $x^4$ & $\partial_{x}^4U = \Phi$ \\
        3 & $x^4+6\mu x^2y^2+y^4, \mu > -\frac{1}{3}$ & $\partial_{x}^4U+6\mu \partial_{x}^2\partial_{y}^2U+\partial_{y}^4U = \Phi, \mu > -\frac{1}{3}$ \\
        4 & $x^4+6\mu x^2y^2+y^4, \mu < -\frac{1}{3}$ & $\partial_{x}^4U+6\mu \partial_{x}^2\partial_{y}^2U+\partial_{y}^4U = \Phi, \mu < -\frac{1}{3}$ \\
        5 & $x^4+6\mu x^2y^2-y^4$ & $\partial_{x}^4U+6\mu \partial_{x}^2\partial_{y}^2U-\partial_{y}^4U = \Phi$ \\
        6 & $x^4 + 6x^2y^2$ & $\partial_{x}^4U+6\partial_{x}^2\partial_{y}^2U = \Phi$ \\ 
        7 & $x^4 - 6x^2y^2$ & $\partial_{x}^4U-6\partial_{x}^2\partial_{y}^2U = \Phi$ \\ 
        8 & $6x^2y^2$ & $\partial_{x}^2\partial_{y}^2U = \Phi$ \\
        9 & $(x^2+y^2)^2$ & $(\partial_{x}^2+\partial_{y}^2)^2U = \Phi$ \\
        10 & $4x^3y$ & $\partial_{x}^3\partial_{y}U = \Phi$  \\
        \bottomrule
    \end{tabular}
    \caption{Canonical forms of real binary quartics and fourth-order linear PDEs}
    \label{tab:pde_fourth}
\end{table}

To establish these canonical forms, we explicitly define the correspondence between the principal parts and their associated binary forms. Consider a third-order linear PDE in two variables, whose principal part is represented by
\begin{equation*}
    L_3[U]=a(x,y)\partial_{x}^3U+3b(x,y)\partial_{x}^2\partial_{y}U+3c(x,y)\partial_{x}\partial_{y}^2U+d(x,y)\partial_{y}^3U.
\end{equation*}
It corresponds directly to the binary cubic
$f_3(x,y)=a(x,y)x^3+3b(x,y)x^2y+3c(x,y)xy^2+d(x,y)y^3$. Similarly, the principal part of a fourth-order linear PDE in two variables is defined as
\begin{equation*}
    L_4[U]=a(x,y)\partial_{x}^4U+4b(x,y)\partial_{x}^3\partial_{y}U+6c(x,y)\partial_{x}^2\partial_{y}^2U+4d(x,y)\partial_{x}\partial_{y}^3U+e(x,y)\partial_{y}^4U,
\end{equation*}
which corresponds to the binary cubic
$f_4(x,y)=a(x,y)x^4+4b(x,y)x^3y+6c(x,y)x^2y^2+4d(x,y)xy^3+e(x,y)y^4$. Note that for local classification at a specific point, the coefficient functions $a, b, c, d, e$ are treated as constants.


Dzhuraev and Popelek~\cite{dzhuraev1991classification} established a classification of third-order PDEs based on the discriminant of the characteristic equation. It was extended to include quadratic terms for a richer set of canonical forms in~\cite{ben2003dispersion}. Based on the one-to-one correspondence between linear PDEs and binary forms, the canonical forms of PDEs can be derived directly based on those of binary forms. We first establish the equivalence between the classification of third- and fourth-order linear PDEs in two variables and the classification of binary cubics and quartics in the following propositions.

\begin{proposition}\label{prop:pde_third}
The classification of third-order linear PDEs in two variables under an invertible linear change of coordinates is identical to the classification of binary cubics under $\GL_{2}(\R)$ transformation.
\end{proposition}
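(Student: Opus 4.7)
The plan is to establish that the correspondence $L_3[U] \leftrightarrow f_3(x,y)$ defined just before the proposition is a bijection which intertwines the two group actions: invertible linear change of coordinates on the PDE side and the standard $\GL_2(\R)$ substitution $f \mapsto f \circ P$ on the binary cubic side. Once this equivariance is verified, orbits of the two actions are in one-to-one correspondence and the two classifications coincide.

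First, I would observe that the assignment $L_3 \mapsto f_3$ is the passage to the principal symbol, obtained by the formal substitution $\partial_x \mapsto x$, $\partial_y \mapsto y$. It is evidently a bijection between principal parts of third-order operators and binary cubics, since both are parametrized by the same ordered coefficient tuple $(a, b, c, d)$ appearing in the definitions of $L_3[U]$ and $f_3(x,y)$.

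Second, I would carry out the chain-rule computation that transforms $L_3$ under the invertible change of coordinates $(x, y)^T = P(\xi, \eta)^T$ with $P = (p_{ij}) \in \GL_2(\R)$. Writing $U(x,y) = V(\xi, \eta)$, the chain rule yields
\[
\begin{pmatrix} \partial_\xi \\ \partial_\eta \end{pmatrix} = P^T \begin{pmatrix} \partial_x \\ \partial_y \end{pmatrix}, \qquad \text{equivalently} \qquad \begin{pmatrix} \partial_x \\ \partial_y \end{pmatrix} = (P^T)^{-1} \begin{pmatrix} \partial_\xi \\ \partial_\eta \end{pmatrix}.
\]
Substituting the second identity into the operator $L_3 = f_3(\partial_x, \partial_y)$ produces a new homogeneous cubic differential operator in $(\partial_\xi, \partial_\eta)$ whose coefficient tuple is obtained from $(a, b, c, d)$ by the same degree-three substitution rule that defines the action on binary cubics. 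Read through the correspondence of Step~1, the transformed principal part corresponds exactly to the binary cubic $f_3 \circ (P^T)^{-1}$.

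Third, since the map $P \mapsto (P^T)^{-1}$ is an automorphism of $\GL_2(\R)$, the coordinate-change action on PDE principal parts is conjugate to the substitution action on binary cubics introduced in Section~\ref{sec:tensor}, so the two actions have identical orbits. This yields the equivalence of classifications claimed in the proposition. The main (and essentially only) obstacle is bookkeeping of conventions --- in particular, confirming that pulling back $\partial_x, \partial_y$ under the chain rule produces the transpose-inverse action on the associated cubic rather than the direct action, and that this reparametrization of $\GL_2(\R)$ is a group automorphism. Once that is in place, equivariance, and hence the bijection of orbits, is immediate; the analogous fourth-order statement relating $L_4[U]$ and binary quartics would follow by an identical argument.
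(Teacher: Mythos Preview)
Your proposal is correct and follows the same core idea as the paper --- establishing that the symbol map $L_3 \mapsto f_3$ is $\GL_2$-equivariant, so that orbits on the two sides coincide. The paper, however, carries this out by brute force: it expands $\partial_u^3 U$, $\partial_u^2\partial_v U$, etc.\ via the chain rule, reads off the transformed coefficients $A,B,C,D$ explicitly, then performs the same substitution $x=p_1u+p_2v$, $y=p_3u+p_4v$ in the cubic $f_3$ and observes the resulting coefficients match term by term. Your argument replaces this computation with two structural observations: the symbol map is literally the substitution $\partial_x\mapsto x$, $\partial_y\mapsto y$, and the induced action on cubics differs from the paper's substitution action only by the group automorphism $P\mapsto (P^T)^{-1}$, which preserves orbits. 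This is shorter and avoids any coefficient bookkeeping; it also makes transparent why no lower-order terms appear (the change of variables is linear) and why the fourth-order case is immediate. The paper's explicit calculation, by contrast, has the minor advantage of exhibiting the transformed coefficients concretely --- useful if one later wants to track how a specific PDE is reduced to its canonical form --- but at the cost of a page of expansions.
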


\begin{proof}
Consider the principal part of the third-order linear PDE $L_3[U]$ and the binary cubic $f_3(x,y)$ as defined above. Let the coordinate transformation be defined by $(x, y)^T = P (u, v)^T$ where $P = \begin{pmatrix} p_1 & p_2 \\ p_3 & p_4 \end{pmatrix}$ is an invertible matrix. By the chain rule, the differential operators transform as
\begin{equation*} \begin{cases} 
    \partial_u = \frac{\partial x}{\partial u} \partial_x + \frac{\partial y}{\partial u} \partial_y = p_1 \partial_x + p_3 \partial_y, \\
    \partial_v = \frac{\partial x}{\partial v} \partial_x + \frac{\partial y}{\partial v} \partial_y = p_2 \partial_x + p_4 \partial_y. \\
\end{cases} \end{equation*}
We apply these operators three times to obtain the transformed third-order derivatives
\begin{equation*}
    \begin{aligned}
      \partial_{u}^3U &= \left( p_1 \partial_x + p_3 \partial_y \right)^3 U = \left( p_1^3 \partial_{xxx} + 3 p_1^2 p_3 \partial_{xxy} + 3 p_1 p_3^2 \partial_{xyy} + p_3^3 \partial_{yyy} \right) U, \\
      \partial_{u}^2\partial_{v}U &= \left( p_1 \partial_x + p_3 \partial_y \right)^2 \left( p_2 \partial_x + p_4 \partial_y \right) U=  \left( p_1^2 p_2 \partial_{x}^3 + (2 p_1 p_2 p_3 + p_1^2 p_4) \partial_{x}^2\partial_{y} + (p_2 p_3^2 + 2 p_1 p_3 p_4) \partial_{x}\partial_{y}^2 + p_3^2 p_4 \partial_{y}^3 \right) U.
    \end{aligned} \\
\end{equation*}
By symmetry, $\partial_{u}\partial_{v}^2U$ and $\partial_{v}^3U$ have similar structures and are built by swapping indices. 

Thus, the transformed principal part is calculated by substituting these operators back into $L_{3}[U]$ as
\[
    L_{3}'[U] = A(u,v)\partial_{u}^3U + 3B(u,v)\partial_{u}^2\partial_{v}U + 3C(u,v)\partial_{u}\partial_{v}^2U + D(u,v)\partial_{v}^3U,
\]
where
\begin{equation*}
    \begin{aligned}
      A(u,v) &= a p_1^3 + 3b p_1^2 p_3 + 3c p_1 p_3^2 + d p_3^3, \\
      B(u,v) &= a p_1^2 p_2 + b(p_1^2 p_4 + 2p_1 p_2 p_3) + c(2p_1 p_3 p_4 + p_2 p_3^2) + d p_3^2 p_4, \\
      C(u,v) &= a p_1 p_2^2 + b(2p_1 p_2 p_4 + p_2^2 p_3) + c(p_1 p_4^2 + 2p_2 p_3 p_4) + d p_3 p_4^2, \\
      D(u,v) &= a p_2^3 + 3b p_2^2 p_4 + 3c p_2 p_4^2 + d p_4^3.
    \end{aligned} \\
\end{equation*}
Since the transformation is linear, the expansion of third-order operators does not generate lower-order terms. Note that $a,b,c,d$ are functions of $x,y$ and become functions of $u,v$ after the transformation. For instance, $a(p_1u+p_2v,p_3u+p_4v)$ and similarly for other coefficients.

For the binary cubic $f_3(x,y)$, we apply $\GL_{2}(\R)$ transformation via the direct algebraic substitution $x=p_1u+p_2v, y=p_3u+p_4v$ and the resulting form is
\begin{equation*}
    \begin{aligned}
        f'_3(u,v)&=a(p_1u + p_2v)^3+3b(p_1u + p_2v)^2(p_3u + p_4v)+3c(p_1u + p_2v)(p_3u + p_4v)^2+d(p_3u + p_4v)^3 \\
        &=A(u,v)u^3+3B(u,v)u^2v+3C(u,v)uv^2+D(u,v)v^3.
    \end{aligned} \\
\end{equation*}
The coefficients of the transformed binary cubic $f'_3(u,v)$ under the substitution are identical to the coefficients of the transformed operator $L_{3}'[U]$

Since the coefficients of the transformed operator $L_{3}'[U]$ and the transformed cubic $f'_3$ are identical, the association between the PDE principal part $L_3$ and the cubic $f_3$ preserves the $\text{GL}_{2}(\R)$ group action. This establishes a one-to-one correspondence between the canonical forms of PDEs and those of binary cubics. 
\end{proof}

\begin{proposition}\label{prop:pde_fourth}
The classification of fourth-order linear PDEs in two variables under an invertible linear change of coordinates is identical to the classification of binary quartics under $\GL_{2}(\R)$ transformation.
\end{proposition}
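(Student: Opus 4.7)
The plan is to replicate the strategy used in the proof of Proposition~\ref{prop:pde_third}: show that a linear change of coordinates $(x,y)^T = P(u,v)^T$ with $P = \begin{pmatrix} p_1 & p_2 \\ p_3 & p_4 \end{pmatrix} \in \GL_2(\R)$ acts identically on the quintuple of leading coefficients $(a,b,c,d,e)$, whether we interpret them as coefficients of the principal part $L_4[U]$ or of the associated binary quartic $f_4(x,y)$. Establishing this coefficient-by-coefficient identification realises a $\GL_2(\R)$-equivariant bijection between the two classification problems, so their orbits and canonical forms must coincide.

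The key steps, in order, are as follows. First, I apply the chain rule to obtain $\partial_u = p_1\partial_x + p_3\partial_y$ and $\partial_v = p_2\partial_x + p_4\partial_y$. Second, I expand each of the five operators $\partial_u^{4-k}\partial_v^{k}$ for $k=0,1,2,3,4$ as products of these first-order operators; because the change of coordinates is linear, the coefficient functions are differentiated against constants, so no lower-order derivative terms are generated. Third, I substitute these expressions into $L_4[U]$ and regroup by fourth-order monomials in $\partial_u, \partial_v$ to obtain the transformed principal part
\[
  L_4'[U] = A\,\partial_u^4 U + 4B\,\partial_u^3\partial_v U + 6C\,\partial_u^2\partial_v^2 U + 4D\,\partial_u\partial_v^3 U + E\,\partial_v^4 U,
\]
where $A,B,C,D,E$ are specific degree-four polynomials in $p_1,\dots,p_4$ with the old coefficients $a,b,c,d,e$ evaluated at $(p_1u+p_2v,\,p_3u+p_4v)$. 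Fourth, I perform the direct algebraic substitution $x = p_1u+p_2v$, $y = p_3u+p_4v$ into $f_4(x,y)$ and expand using the binomial theorem to obtain $f_4'(u,v) = A'u^4 + 4B'u^3v + 6C'u^2v^2 + 4D'uv^3 + E'v^4$. Fifth, I verify that $(A,B,C,D,E) = (A',B',C',D',E')$ by matching the coefficient of each monomial $p_1^{i_1}p_2^{i_2}p_3^{i_3}p_4^{i_4}$ produced by the two expansions.

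The main obstacle is the bookkeeping for the middle coefficient $6C$, where the factorisation $(p_1\partial_x+p_3\partial_y)^2(p_2\partial_x+p_4\partial_y)^2$ mixes all five coefficients $a,b,c,d,e$ through many cross-terms, and the binomial factors $4$ and $6$ appearing in both $L_4[U]$ and $f_4$ must be checked to combine correctly. The structural reason this always works is that both constructions are realisations of the fourth symmetric power of $P$ acting on a common underlying symmetric four-tensor; the principal part $L_4[U]$ and the binary quartic $f_4$ are two equivalent encodings of that tensor, and the linear change of coordinates on derivative operators corresponds exactly to the substitution $(x,y)^T = P(u,v)^T$ in the polynomial. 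Invoking this identification, or completing the five explicit coefficient comparisons term by term, establishes the claimed $\GL_2(\R)$-equivariant bijection, from which the equivalence of classifications and the canonical forms of Table~\ref{tab:pde_fourth} follow at once.
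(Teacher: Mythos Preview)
Your proposal is correct and follows essentially the same approach as the paper's proof: apply the chain rule to express $\partial_u,\partial_v$ in terms of $\partial_x,\partial_y$, expand the fourth-order operators $(p_1\partial_x+p_3\partial_y)^{4-k}(p_2\partial_x+p_4\partial_y)^{k}$, read off the transformed coefficients $A,B,C,D,E$, and check that the identical coefficients arise from the algebraic substitution $x=p_1u+p_2v$, $y=p_3u+p_4v$ in $f_4$. The paper carries out exactly this computation, writing out $A,\dots,E$ explicitly; your additional remark that both constructions realise the fourth symmetric power of $P$ on a common symmetric tensor is a helpful conceptual gloss but not a different method.
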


\begin{proof}

Consider the principal part of the fourth-order linear PDE $L_3[U]$ and the binary quartic $f_4(x,y)$ defined as above. Apply the coordinate transformation via $P \in \GL_{2}(\R)$ and we obtain the differential operators $\partial_u$ and $\partial_v$. We apply these operators four times to obtain the transformed fourth-order derivatives. The leading term $\partial_{u}^4U$ is computed as
\begin{equation*}
    \partial_{u}^4U = \left( p_1 \partial_x + p_3 \partial_y \right)^4 U = \left( p_1^4 \partial_{xxxx} + 4 p_1^3 p_3 \partial_{xxxy} + 6 p_1^2 p_3^2   \partial_{xxyy} + 4 p_1 p_3^3 \partial_{xyyy} + p_3^4 \partial_{yyyy} \right) U
\end{equation*}
and the mixed terms are 
\[
    \partial_{u}^3\partial_{v}U = \left( p_1 \partial_x + p_3 \partial_y \right)^3 \left( p_2 \partial_x + p_4 \partial_y \right) U, \quad \partial_{u}^2\partial_{v}^2U = \left( p_1 \partial_x + p_3 \partial_y \right)^2 \left( p_2 \partial_x + p_4 \partial_y \right)^2 U.
\]
By symmetry, $\partial_{u}\partial_{v}^3U$, and $\partial_{v}^4U$ are constructed similarly. Substituting these operators back into $P_4$, the transformed principal part is
\[
    L_{4}'[U] = A(u,v)\partial_{u}^4U + 4B(u,v)\partial_{u}^3\partial_{v}U + 6C(u,v)\partial_{u}^2\partial_{v}^2U + 4D(u,v)\partial_{u}\partial_{v}^3U + E(u,v)\partial_{v}^4U,
\]
where
\begin{equation*}
    \begin{aligned}
    A(u,v) &= a p_1^4 + 4b p_1^3 p_3 + 6c p_1^2 p_3^2 + 4d p_1 p_3^3 + e p_3^4, \\
    B(u,v) &= a p_1^3 p_2 + b(3 p_1^2 p_2 p_3 + p_1^3 p_4) + c(3 p_1 p_2 p_3^2 + 3 p_1^2 p_3 p_4) + d(p_2 p_3^3 + 3 p_1 p_3^2 p_4) + e p_3^3 p_4, \\
    C(u,v) &= a p_1^2 p_2^2 + 2b (p_1^2 p_2 p_4 + p_1 p_2^2 p_3) + c (p_1^2 p_4^2 + 4 p_1 p_2 p_3 p_4 + p_2^2 p_3^2) + 2d (p_1 p_3 p_4^2 + p_2 p_3^2 p_4) + e p_3^2 p_4^2, \\
    D(u,v) &= a p_1 p_2^3 + b (p_2^3 p_3 + 3 p_1 p_2^2 p_4) + 3c (p_1 p_2 p_4^2 + p_2^2 p_3 p_4) + d (p_1 p_4^3 + 3 p_2 p_3 p_4^2) + e p_3 p_4^3, \\
    E(u,v) &= a p_2^4 + 4b p_2^3 p_4 + 6c p_2^2 p_4^2 + 4d p_2 p_4^3 + e p_4^4.
    \end{aligned}
\end{equation*}
Note that $a, b, c, d, e$ are functions of $x,y$ and become functions of $u,v$ after the transformation (e.g., $a(p_1u+p_2v,p_3u+p_4v), \ldots$).

For the binary quartic, we substitute $x=p_1u+p_2v$ and $y=p_3u+p_4v$ directly into $f_4(x,y)$ and the resulting form is
\begin{equation*}
    \begin{aligned}
    f'_4(u,v)=A u^4 + 4B u^3 v + 6C u^2 v^2 + 4D u v^3 + E v^4.
    \end{aligned}
\end{equation*}
The coefficients $A, \dots, E$ of $f'_4(u,v)$ are identical to the coefficients of the PDE $L_{4}'[U](u,v)$. In this algebraic context, $a,b,c,d,e$ are treated as formal parameters.
\end{proof}

\begin{remark}\label{rmk:pde}
The equivalence established in Propositions~\ref{prop:pde_third} and~\ref{prop:pde_fourth} holds in both real and complex domains. However, eigenvalue-based classification is insufficient for real PDEs, which require distinguishing between real orbits as shown in the tables above.
\end{remark}

\section*{Use of AI tools declaration}
The authors declare they have not used Artificial Intelligence (AI) tools in the creation of this article.

\section*{Acknowledgments}
L. Fang was partially supported by the Natural Science Foundation of Xiamen (Grant No. 3502Z202371014).

H.-L. Huang was partially supported by the Key Program of the Natural Science Foundation of Fujian Province (Grant No. 2024J02018) and the National Natural Science Foundation of China (Grant No. 12371037).


\section*{Conflict of interest}
The authors declare no conflicts of interest.

\begin{spacing}{.88}
\setlength{\bibsep}{2.pt}
\bibliographystyle{abbrvnat}
\bibliography{tensor_classification}

\end{spacing}
\end{document}